\numberwithin{equation}{section}
\newtheorem{prop}{Proposition}[section]
\newtheorem{thm}[prop]{Theorem}
\newtheorem{lemm}[prop]{Lemma}
\newtheorem{coro}[prop]{Corollary}
\newtheorem*{claim*}{Claim}
\theoremstyle{definition}
\newtheorem{defi}[prop]{Definition}
\newtheorem{rmk}[prop]{Remark}
\newcommand{\RR}{\mathbb{R}}
\newcommand{\cA}{\mathcal A}
\newcommand{\cF}{\mathcal F}
\renewcommand{\cH}{\mathcal H}
\def\fM{\mathfrak{M}}
\newcommand{\ep}{\epsilon}
\newcommand{\sing}{\text{sing}}
\newcommand{\reg}{\text{reg}}
\let\oldmarginpar\marginpar
\renewcommand\marginpar[1]{\-\oldmarginpar[\raggedleft\footnotesize #1]%
{\raggedright\footnotesize #1}}
\DeclareMathOperator{\dist}{dist}
\title[A Compactness Result for Energy-minimizing Harmonic Maps]{A Compactness Result for Energy-minimizing Harmonic Maps with Rough Domain Metric}
\author[D.~Cheng]{Da Rong Cheng}
\address{Department of Mathematics, Stanford University, Stanford, CA 94305}
\email{tjcheng@stanford.edu}
\begin{document}
\begin{abstract}
In 1996, Shi \cite{shi} generalized the $\ep$-regularity theorem of Schoen and Uhlenbeck \cite{su} to energy-minimizing harmonic maps from a domain equipped with a Riemannian metric of class $L^{\infty}$. In the present work we prove a compactness result for such energy-minimizing maps. As an application, we combine our result with Shi's theorem to give an improved bound on the Hausdorff dimension of the singular set, assuming that the map has bounded energy at all scales. This last assumption can be removed when the target manifold is simply-connected.
\end{abstract}
\subjclass[2010]{58E20}

\maketitle



\section{Introduction and Statement of Main Results}
Let $N$ be a smooth compact Riemannian manifold, isometrically embedded in some Euclidean space $\RR^{m}$. Let $B$ denote the unit ball in $\RR^{n}$. The Sobolev space $W^{1, 2}(B; N)$ is defined by
\[
W^{1, 2}(B; N) = \{ u \in W^{1, 2}(B; \RR^{m})|\ u(x) \in N\text{ for a.e. }x \in B \}.
\]
Let $\cA$ denote the collection of open subsets of $B$. Given a positive number $\Lambda$, we let $\cF_{\Lambda}$ denote the class of funtionals $E: L^{2}(B; \RR^{m})\times \cA \to [0, +\infty]$ which have the form
\begin{equation}
\label{flambda}
E(u, A) = 
\left\{
\begin{array}{cl}
\int_{A}\sqrt{g}g^{ij}D_{j}u \cdot D_{i}u & ,\ u|_{A} \in W^{1, 2}(A; \RR^{m})\\
+\infty &,\ \text{otherwise}
\end{array},
\right.
\end{equation}
where $g$ is a Riemannian metric on $B$ of class $L^{\infty}$ satisfying
\begin{equation}
\label{ellipticity}
\Lambda^{-1}|\xi|^{2} \leq \sqrt{g}(x)g^{ij}(x)\xi_{i}\xi_{j} \leq \Lambda |\xi|^{2},\ \text{for a.e. }x \in B\text{ and all }\xi \in \RR^{n}.
\end{equation}
In the present work we are interested in the local minimizers of functionals in $\cF_{\Lambda}$. To be precise, let $\fM_{\Lambda}$ be the subset of $W^{1, 2}(B; N)$ consisting of maps $u$ for which there exists an $E \in \cF_{\Lambda}$ such that for each $B_{r}(x) \subset\subset B$ and each $v \in W^{1, 2}(B_{r}(x); N)$ with $u-  v \in W^{1, 2}_{0}(B_{r}(x); \RR^{m})$, we have
\begin{equation}
\label{mindef}
E(u ,B_{r}(x)) \leq E(v, B_{r}(x)).
\end{equation}
Finally we'll also denote the ordinary Dirichlet energy (with respect to the Euclidean metric) by
\[
E^{0}(u, A) = \int_{A}|du|^{2}.
\]
Our main result is the following.
\begin{thm}
\label{compactness}
Let $\{u_{k}\}$ be a sequence of maps in $\fM_{\Lambda}$ with 
\[
\sup\limits_{k}E^{0}(u_{k}, B_{r}(x)) < +\infty, \text{ for each }B_{r}(x) \subset\subset B.
\] 
Then, passing to a subsequence if necessary, there exists $u \in \fM_{\Lambda}$ such that 
\begin{enumerate}
\item $u_{k} \to u$ weakly in $W^{1, 2}(B_{r}(x); \RR^{m})$ and strongly in $L^{2}(B_{r}(x); \RR^{m})$ for each $B_{r}(x) \subset\subset B$.
\item Suppose $E_{k}$ and $E$ are functionals in $\cF_{\Lambda}$ that $u_{k}$ and $u$ locally minimize, respectively. Then for each $B_{r}(x) \subset\subset B$, we have
\begin{equation}
\label{Econv}
\lim\limits_{k \to \infty}E_{k}(u_{k}, B_{r}(x)) = E(u, B_{r}(x)).
\end{equation}
\end{enumerate}
\end{thm}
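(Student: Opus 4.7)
My plan combines weak compactness with Luckhaus-type constructions, bridging recovery sequences from $\Gamma$-convergence back to the constrained setting of admissible competitors.

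\textbf{Step 1 (Weak limits).} By the uniform local $W^{1,2}$ bound, Rellich-Kondrachov, and a diagonal argument over an exhaustion of $B$, I extract a subsequence with $u_k \rightharpoonup u$ weakly in $W^{1,2}_{\text{loc}}(B)$, strongly in $L^2_{\text{loc}}(B)$ and almost everywhere; since $N$ is closed, $u\in W^{1,2}_{\text{loc}}(B;N)$. The coefficients $a_k^{ij}:=\sqrt{g_k}\,g_k^{ij}$ of $E_k$ lie in a uniformly $L^\infty$-bounded, uniformly elliptic class satisfying \eqref{ellipticity}; such quadratic functionals are compact under $\Gamma$-convergence on $L^2$, with the $\Gamma$-limit again a uniformly elliptic symmetric quadratic form with the same ellipticity constant $\Lambda$. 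Passing to a further subsequence, $E_k \xrightarrow{\Gamma} E$ for some $E\in\cF_\Lambda$ (for $n\ne 2$, the underlying metric $g$ is recovered from the limit coefficient matrix via $g=(\det a)^{2/(n-2)}$).

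\textbf{Step 2 (Luckhaus comparison).} Fix $B_r(x)\subset\subset B$ and a competitor $v\in W^{1,2}(B_r;N)$ with $u-v\in W_0^{1,2}(B_r;\RR^m)$. The $\Gamma$-limsup property supplies $v_k'\to v$ in $L^2(B_r)$ with $E_k(v_k',B_r)\to E(v,B_r)$. I then apply a Luckhaus-type interpolation on a thin annulus $B_r\setminus B_{r(1-\delta)}$ to modify $v_k'$ into $\tilde v_k\in W^{1,2}(B_r;N)$ satisfying (i) $\tilde v_k - u_k\in W_0^{1,2}(B_r;\RR^m)$, by interpolating to $u_k$ near $\partial B_r$; (ii) $\tilde v_k$ takes values in $N$ everywhere; and (iii) the standard annular energy estimate
\[
E^0(\tilde v_k, B_r\setminus B_{r(1-2\delta)}) \;\le\; C\bigl[\delta + \delta^{-1}\|u_k - v\|_{L^2(B_r)}^2\bigr],
\]
with $C$ absorbing the uniform $W^{1,2}$ bounds on $u_k$ and $v$. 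Dominating $E_k$ by $\Lambda E^0$ on the annulus via \eqref{ellipticity}, and using strong $L^2$ convergence $u_k\to u$, sending $k\to\infty$ and then $\delta\to 0$ yields
\[
\limsup_{k\to\infty} E_k(\tilde v_k,B_r) \;\leq\; E(v,B_r).
\]

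\textbf{Step 3 (Minimality and energy convergence).} Minimality of $u_k$ for $E_k$ gives $E_k(u_k,B_r)\leq E_k(\tilde v_k,B_r)$, so $\limsup_k E_k(u_k,B_r)\leq E(v,B_r)$. The $\Gamma$-liminf inequality $E(u,B_r)\leq \liminf_k E_k(u_k,B_r)$ (valid since $u_k\to u$ in $L^2_{\text{loc}}$) then forces $E(u,B_r)\leq E(v,B_r)$, proving $u\in\fM_\Lambda$. Taking $v=u$ sandwiches $\lim_k E_k(u_k,B_r)=E(u,B_r)$, establishing \eqref{Econv}. The main technical obstacle is the Luckhaus modification in Step 2: the interpolant $\tilde v_k$ must simultaneously respect the nonlinear $N$-target constraint, match the possibly oscillating boundary trace of $u_k$, and satisfy the sharp annular estimate, all under an $L^\infty$ metric $g_k$ that admits no smooth approximation. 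Classical Luckhaus arguments handle the $N$-constraint in the smooth-metric regime; adapting them so the estimate survives in the rough-metric setting, and so the $\Gamma$-convergence recovery sequence can be absorbed into the annular error, is the core of the proof.
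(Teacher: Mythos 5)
Your overall architecture matches the paper's: extract a weak limit, use $\Gamma$-compactness of the uniformly elliptic quadratic functionals to get $E$, build competitors for $u_k$ from a recovery sequence via a Luckhaus gluing, and close with the minimality of $u_k$ and the $\Gamma$-liminf inequality (taking $v=u$ for the energy convergence). However, there are two genuine gaps in Step 2, both of which are exactly where the paper spends its effort.

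First, the recovery sequence $v_k'$ produced by the $\Gamma$-limsup inequality is only $\RR^m$-valued and converges to $v$ only in $L^2$. The Luckhaus lemma interpolates between two \emph{$N$-valued} maps, and the nearest-point projection $\pi$ onto $N$ is not even defined on $v_k'$ wherever its values leave a tubular neighborhood of $N$ --- which $L^2$ convergence cannot rule out. You list ``$\tilde v_k$ takes values in $N$'' as a property of your interpolant but give no mechanism for achieving it, and you locate the difficulty in the roughness of $g_k$; in fact the Luckhaus step is a purely Euclidean-energy statement and the rough metric enters only through the ellipticity bound \eqref{ellipticity}. The real missing ingredient is the paper's truncation argument (Step 1 of its proof, after \cite{bd}): componentwise truncation of $v_k'$ at distance $1/p$ from $\tilde v$ upgrades $L^2$ convergence to $L^\infty$ convergence while preserving the limsup energy inequality (the modification happens on a set of measure $\le 1/p$, where the energy is controlled by that of $\tilde v$); only then is $\pi\circ w_k$ defined and energy-controlled.

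Second, your interpolation annulus $B_r\setminus B_{r(1-\delta)}$ sits inside $B_r$, and the Luckhaus hypothesis requires $\epsilon^{-2n}\rho^{-n}\int_{\mathrm{annulus}}|\tilde v_k-u_k|^2$ to be small, with $\epsilon\sim\delta$ the relative annulus width. Since $u-v\in W_0^{1,2}(B_r)$ does \emph{not} imply $u=v$ a.e.\ near $\partial B_r$, the limit of $\int_{\mathrm{annulus}}|\tilde v_k-u_k|^2$ as $k\to\infty$ is $\int_{\mathrm{annulus}}|v-u|^2$, which is positive for fixed $\delta$ and tends to $0$ as $\delta\to 0$ with no rate --- so it cannot be guaranteed to beat the required $\delta^{2n}$ factor. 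Your order of limits (first $k\to\infty$, then $\delta\to 0$) therefore fails to verify the lemma's hypothesis. The paper sidesteps this by first extending $v$ by $u$ outside $B_\theta$ and interpolating on an \emph{outer} annulus contained in $\{ \tilde v = u\}$, where $\|\tilde w_k-u_k\|_{L^2}\to 0$ in $k$ for a fixed annulus; it also selects the annulus by pigeonhole among $M$ candidates so that the Dirichlet energies of both maps on it are $\le\delta$, which is needed to make the transition energy $O(\delta)$ rather than merely bounded.
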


Before mentioning an application of Theorem \ref{compactness}, we recall that in 1996, Shi proved the following $\ep$-regularity result for maps in $\fM_{\Lambda}$.
\begin{thm}[\cite{shi}]
\label{shi}
There exists positive numbers $\ep,\ \tau$ and $\alpha$ depending only on $n$ and $\Lambda$ such that if $u \in \fM_{\Lambda}$ and $B_{r}(x) \subset\subset B$ satisfies
\[
r^{2-n}E^{0}(u, B_{r}(x)) \leq \ep.
\]
Then 
\[
(\tau r)^{2-n}E^{0}(u, B_{\tau r}(x)) \leq \frac{1}{2}r^{2-n}E^{0}(u, B_{r}(x)).
\]
\end{thm}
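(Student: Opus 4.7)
My approach is the classical blow-up/contradiction argument of Schoen-Uhlenbeck, with harmonicity of the blow-up limit replaced by the fact that it solves a linear divergence-form equation with $L^{\infty}$ coefficients, to which De Giorgi-Nash-Moser theory applies.

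\textbf{Contradiction setup and rescaling.} Fix $\tau \in (0, 1/2)$ to be chosen later (depending only on $n$ and $\Lambda$). Suppose the conclusion fails for $\tau$: there exist $u_{k} \in \fM_{\Lambda}$ (each minimizing some $E_{k} \in \cF_{\Lambda}$ with metric $g_{k}$) and balls $B_{r_{k}}(x_{k}) \subset\subset B$ with $\eta_{k} := r_{k}^{2-n} E^{0}(u_{k}, B_{r_{k}}(x_{k})) \to 0$ but $(\tau r_{k})^{2-n} E^{0}(u_{k}, B_{\tau r_{k}}(x_{k})) > \tfrac{1}{2} \eta_{k}$. Translate, rescale, and subtract the average: set $\tilde{g}_{k}(y) := g_{k}(x_{k} + r_{k} y)$ and $v_{k}(y) := \eta_{k}^{-1/2}(u_{k}(x_{k} + r_{k} y) - \bar{u}_{k})$, where $\bar{u}_{k}$ is the $L^{2}$-average of $u_{k}$ on $B_{r_{k}}(x_{k})$. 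Then $\int_{B_{1}}|dv_{k}|^{2} = 1$, $\int_{B_{1}} v_{k} = 0$, $\tilde{g}_{k}$ obeys \eqref{ellipticity}, and $v_{k}$ minimizes $\tilde{E}_{k}(w, A) := \int_{A} \sqrt{\tilde{g}_{k}} \tilde{g}_{k}^{ij} D_{i}w \cdot D_{j}w$ among competitors taking values in the dilated target $N_{k} := \eta_{k}^{-1/2}(N - \bar{u}_{k})$.

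\textbf{The blow-up limit solves a linear system.} Pass to a subsequence so that $v_{k} \rightharpoonup v_{\infty}$ weakly in $W^{1,2}(B_{1})$, $v_{k} \to v_{\infty}$ in $L^{2}$, and $\bar{u}_{k} \to p \in N$. Because $N_{k}$ exhausts $\RR^{m}$ on any compact set, for $\phi \in C_{c}^{\infty}(B_{1}; \RR^{m})$ the perturbation $u_{k} + t \eta_{k}^{1/2} \phi$ can be projected onto $N$ via nearest-point retraction with an error of order $O(\eta_{k})$ that contributes only at second order in $t$. Combined with minimality of $u_{k}$ and division by $\eta_{k}$, this yields a linearized Euler-Lagrange inequality for $v_{k}$ with coefficients $\tilde{A}_{k}^{ij} := \sqrt{\tilde{g}_{k}}\,\tilde{g}_{k}^{ij}$, modulo error $o(1)$. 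After a further subsequence for which the $\Lambda$-elliptic $\tilde{A}_{k}$ $H$-converge to a $\Lambda$-elliptic $A_{\infty} \in L^{\infty}$, each component of $v_{\infty}$ weakly solves $\partial_{i}(A_{\infty}^{ij} \partial_{j} v_{\infty}) = 0$ on $B_{1}$.

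\textbf{Linear decay and transfer.} By De Giorgi-Nash-Moser for scalar linear equations with $L^{\infty}$ coefficients, there exist $\alpha = \alpha(n, \Lambda) \in (0,1)$ and $C = C(n, \Lambda)$ with
\[
\tau^{2-n} \int_{B_{\tau}} |dv_{\infty}|^{2} \leq C \tau^{2\alpha} \int_{B_{1}} |dv_{\infty}|^{2} \leq C \tau^{2\alpha}.
\]
Fix $\tau$ so that $4\Lambda^{2} C \tau^{2\alpha} < 1$. The final step is to show $\int_{B_{\tau}} |dv_{k}|^{2} \to \int_{B_{\tau}} |dv_{\infty}|^{2}$. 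Weak lower semicontinuity gives one direction; for the reverse, construct a Luckhaus-type competitor in a thin spherical shell near $\partial B_{\tau}$, interpolating between $v_{k}$ (on the outer boundary) and $v_{\infty}$ (just inside) and then projecting onto $N_{k}$ (valid since both maps are within $O(\eta_{k}^{1/2})$ of $p$ while $N_{k}$ expands at rate $\eta_{k}^{-1/2}$). The minimality of $v_{k}$ against this competitor, together with $\Lambda$-ellipticity of $\tilde{g}_{k}$ and $A_{\infty}$, converts the convergence $\tilde{E}_{k}(v_{k}, B_{\tau}) \to \int_{B_{\tau}} A_{\infty}^{ij} D_{i} v_{\infty} \cdot D_{j} v_{\infty}$ (a byproduct of $H$-convergence) into the required Dirichlet-energy convergence. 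Hence $\tau^{2-n} \int_{B_{\tau}} |dv_{k}|^{2} < \tfrac{1}{2}$ for large $k$, contradicting the failure hypothesis. The principal obstacle is this last energy-convergence step: one must build valid $N_{k}$-valued competitors matching the boundary data of $v_{k}$ while controlling the energy gap against oscillating coefficients $\tilde{g}_{k}$, which requires delicate use of the Luckhaus interpolation lemma adapted to the varying metric.
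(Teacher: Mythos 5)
You should first note that the paper does not prove Theorem \ref{shi} at all: it is imported verbatim from Shi's paper \cite{shi} and used as a black box, so there is no in-paper proof to compare against. Judged on its own terms, your sketch follows the classical Schoen--Uhlenbeck blow-up/contradiction scheme adapted to $L^{\infty}$ metrics (linearized limit equation, De Giorgi--Nash--Moser decay, transfer back to $v_{k}$ via minimality), which is the right architecture and is close in spirit both to Shi's argument and to the $\Gamma$-convergence machinery the paper itself develops in Sections 2--3. However, two steps as written are genuinely flawed.

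First, the justification of the linearization is wrong: $N_{k}=\eta_{k}^{-1/2}(N-\bar u_{k})$ does not ``exhaust $\RR^{m}$''; since $N$ has positive codimension, $N_{k}$ flattens locally onto the affine tangent plane of $N$ at $p$. The variation argument must therefore be run through the tangential projection $d\pi_{u_{k}}$ (whose derivative term is the $O(\eta_{k}^{1/2})$ error), and one must check separately that $v_{\infty}$ takes values in that plane so that testing against tangential directions really makes each component a weak solution of the scalar equation; this is repairable, but not with the reason you give. Second, and more seriously, the announced final step $\int_{B_{\tau}}|dv_{k}|^{2}\to\int_{B_{\tau}}|dv_{\infty}|^{2}$ is false in general: when the coefficients $\tilde g_{k}$ oscillate, minimizers develop gradient oscillations, only weak lower semicontinuity holds, and homogenization examples give a strict energy gap. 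Likewise $\tilde E_{k}(v_{k},B_{\tau})\to\int_{B_{\tau}}A_{\infty}^{ij}D_{i}v_{\infty}\cdot D_{j}v_{\infty}$ is not ``a byproduct of $H$-convergence'': the upper bound requires the minimality of $v_{k}$ tested against an $N_{k}$-valued recovery competitor glued by Luckhaus (the exact analogue of Proposition \ref{umin} and conclusion (2) of Theorem \ref{compactness}), and the identification of the limit equation needs the error term to be compact in $H^{-1}$ (it is only small in total variation), which your sketch does not address. The contradiction can still be closed without Dirichlet-energy convergence: use \eqref{ellipticity} to write $\tau^{2-n}\int_{B_{\tau}}|dv_{k}|^{2}\leq\Lambda\,\tau^{2-n}\tilde E_{k}(v_{k},B_{\tau})$, prove only the one-sided bound $\limsup_{k}\tilde E_{k}(v_{k},B_{\tau})\leq\Lambda\int_{B_{\tau}}|dv_{\infty}|^{2}+C\delta$ by the minimality/Luckhaus argument, and absorb the resulting $\Lambda^{2}$ loss into the choice $C\Lambda^{2}\tau^{2\alpha}<1/2$ --- which is evidently what your condition $4\Lambda^{2}C\tau^{2\alpha}<1$ anticipates. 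So the strategy is salvageable, but as written the last step asserts a false convergence and under-justifies the energy upper bound that actually carries the proof.
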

For a map $u \in \fM_{\Lambda}$, we define the regular set to be
\begin{equation}
\label{reg}
\reg\ u = \left\{ x \in B |\ u \text{ is H\"older continuous on a neighborhood of }x\right\}.
\end{equation}
The singular set is then defined to be the complement of $\reg\ u$:
\[
\sing\ u = B - \reg\ u.
\]
For technical reasons we also define 
\begin{equation}
\label{sing}
\sing_{E}\ u=  \left\{ x \in B |\ \liminf\limits_{r \to 0}r^{2-n}E^{0}(u, B_{r}(x)) \geq \frac{1}{2}(1 + \Lambda)^{-2}\epsilon \right\}.
\end{equation}
Then $\sing\ u \subset \sing_{E}\ u$ and Shi's theorem immediately implies that 
\begin{equation}
\label{n-2}
\cH^{n-2}(\sing_{E}\ u \cap B_{1/2}) = 0, \text{ for all }u \in \fM_{\Lambda}.
\end{equation}
Below we will combine Theorem \ref{shi} and Theorem \ref{compactness} to get an improved bound on the singular set of a map $u \in \fM_{\Lambda}$, assuming that its energy is bounded at all scales.
\begin{thm}
\label{n-2-e}
There exists $\epsilon$ depending only on $n,\ \Lambda$ and $E_{0}$ such that for all $u \in \fM_{\Lambda}$ satisfying
\begin{equation}
\label{bmo}
r^{2-n}E^{0}(u, B_{r}(x)) \leq E_{0}\text{ for all } x \in B_{1/2},\ r \in (0, 1/4),
\end{equation}
 we have $\cH^{n-2-\epsilon}(\sing\ u \cap B_{1/2}) = 0$.
\end{thm}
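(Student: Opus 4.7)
The plan is to combine Theorem \ref{compactness} with an $\epsilon$-regularity-based blow-up analysis à la Federer, using compactness to replace the role that a monotonicity formula plays in the classical smooth-metric case. The hypothesis \eqref{bmo} is used to ensure that blow-ups (and blow-ups of blow-ups) satisfy the same scale-invariant energy bound and so remain in a controlled class. Concretely, for $u \in \fM_\Lambda$ satisfying \eqref{bmo} and $x_0 \in \sing u \cap B_{1/2}$, the rescalings $u_{x_0, r}(y) := u(x_0 + ry)$ again locally minimize with respect to the rescaled metric (which still satisfies \eqref{ellipticity} with the same $\Lambda$), and the change of variables combined with \eqref{bmo} gives $E^0(u_{x_0, r}, B_R) \leq E_0 R^{n-2}$ for every $R > 0$ and every $r$ with $rR \leq 1/4$. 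Applying Theorem \ref{compactness} on an exhaustion of $\RR^n$ and diagonalizing, any sequence $r_k \to 0$ admits a subsequence along which $u_{x_0, r_k}$ converges to a tangent map $\varphi$, which is a local minimizer of some functional in $\cF_\Lambda$ on every ball in $\RR^n$ and which again satisfies \eqref{bmo}.

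The key compactness-based lemma is the closedness of $\sing_E$: if $u_k \to u$ in the sense of Theorem \ref{compactness} and $y_k \to y$ with $y_k \in \sing u_k$, then $y \in \sing_E u$. Indeed, $y_k \in \sing u_k$ forces $r^{2-n}E^0(u_k, B_r(y_k)) \geq \epsilon$ at every small scale (contrapositive of Theorem \ref{shi} iterated, since otherwise the geometric decay would imply Hölder regularity at $y_k$). Combining this with the ellipticity \eqref{ellipticity} (so that $\Lambda^{-1}E^0 \leq E_k \leq \Lambda E^0$) and the energy convergence \eqref{Econv} on balls $B_r(y) \supset B_{r - |y_k - y|}(y_k)$ for $k$ large:
\[
\Lambda^{-1}\epsilon(r - |y_k - y|)^{n-2} \leq \Lambda^{-1}E^0(u_k, B_{r - |y_k - y|}(y_k)) \leq E_k(u_k, B_r(y)) \to E(u, B_r(y)) \leq \Lambda E^0(u, B_r(y)),
\]
so $r^{2-n} E^0(u, B_r(y)) \geq \Lambda^{-2}\epsilon \geq \tfrac{1}{2}(1+\Lambda)^{-2}\epsilon$ and hence $y \in \sing_E u$. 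The constant $\tfrac{1}{2}(1+\Lambda)^{-2}$ in \eqref{sing} is engineered precisely to absorb these losses; in particular, $0 \in \sing_E \varphi$ for every tangent map $\varphi$ taken at a singular point.

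With these ingredients in place, I would argue by contradiction: suppose $\cH^{n-2-\epsilon}(\sing u \cap B_{1/2}) > 0$ for every $\epsilon > 0$. The family of sets $\sing_E \varphi \cap B_1$, as $\varphi$ ranges over local minimizers on $\RR^n$ of functionals in $\cF_\Lambda$ satisfying \eqref{bmo}, is closed under rescalings at points of its members and, by the closedness lemma, under Hausdorff limits of such rescaled sequences on compact sets. A standard Besicovitch--Federer iterative dimension-reduction argument on this family --- each iteration picking up an additional translation direction inherited from a density point of the current singular set --- produces, for any small $\delta > 0$, a set $S$ in the family which is translation-invariant along an affine subspace of dimension strictly greater than $n - 2 - \epsilon - \delta$. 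Choosing $\epsilon$ and $\delta$ small enough in terms of $n$, $\Lambda$, and $E_0$, this forces $S$ to contain an $(n-2)$-dimensional affine subspace, hence $\cH^{n-2}(S) = +\infty$, contradicting the baseline estimate \eqref{n-2} applied to the corresponding tangent map.

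The main technical difficulty is the dimension-reduction step itself: Federer's classical argument crucially uses the homogeneity of tangent maps, a property guaranteed in the smooth case by a monotonicity formula but unavailable in the $L^\infty$ metric setting. The reduction must instead be driven purely by the set-theoretic closedness of $\sing_E$, which is exactly what Theorem \ref{compactness} together with the engineered definition \eqref{sing} provides, rather than by any rigidity of the tangent maps themselves. Carefully tracking the iterative blow-ups --- how translation directions accumulate at density points, how the energy bound \eqref{bmo} is preserved, and how closedness is maintained across iterations --- is the core technical content and ultimately fixes the explicit value of $\epsilon(n, \Lambda, E_0)$.
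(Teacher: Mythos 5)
Your ``closedness lemma'' for $\sing_{E}$ is sound and is in fact the engine of the paper's argument (there it appears as a covering statement: by Theorem \ref{shi} and \eqref{Econv}, for large $k$ the sets $\sing v_{k}$ are contained in any prescribed neighborhood of $\sing_{E}v$). But the overall architecture of your proof has a genuine gap, and it is the one you yourself flag at the end. Federer dimension reduction is not driven by set-theoretic closedness of the singular sets alone: the step where ``each iteration picks up an additional translation direction inherited from a density point'' requires that the blow-up object at a point of positive upper density be \emph{homogeneous}, so that its singular set is a cone containing the direction of approach; only then does accumulating directions produce an affine subspace of translation invariance. Homogeneity of tangent maps comes from a monotonicity formula, which is unavailable for an $L^{\infty}$ metric --- indeed this is precisely why the theorem only claims an $\epsilon$-improvement over $n-2$ rather than the integer bound $n-3$ of Schoen--Uhlenbeck. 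As written, the assertion that the iteration terminates in a set translation-invariant along a subspace of dimension greater than $n-2-\epsilon-\delta$ has no justification, and I do not see how to supply one from the ingredients you list. A secondary problem: you negate the statement for a single fixed $u$ (``$\cH^{n-2-\epsilon}(\sing u\cap B_{1/2})>0$ for every $\epsilon$''), which would at best yield $\dim_{\cH}(\sing u)<n-2$ for each $u$ separately, not an $\epsilon$ depending only on $n,\Lambda,E_{0}$; the correct negation produces a sequence $u_{k}$ with $\epsilon_{k}\to 0$.

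The paper's proof avoids dimension reduction entirely and is much softer. From the (correctly negated) sequence $u_{k}$, $\epsilon_{k}\to 0$, one uses the Federer density lower bound for the pre-measure $\varphi^{s}$ of \eqref{hinf} to choose $x_{k}\in\sing u_{k}\cap B_{1/2}$ and $r_{k}$ with $\varphi^{n-2-\epsilon_{k}}\bigl(\sing u_{k}\cap B_{r_{k}/2}(x_{k})\bigr)\geq c_{n}r_{k}^{n-2-\epsilon_{k}}$, performs a \emph{single} rescaling per $k$ (no iterated blow-ups, no tangent maps), invokes Theorem \ref{compactness} once to get a limit $v\in\fM_{\Lambda}$, and then uses exactly your closedness mechanism to conclude that any ball cover of $\sing_{E}v\cap B_{1/2}$ eventually covers $\sing v_{k}\cap B_{1/2}$, whence $\sum_{i}r_{i}^{n-2-\epsilon_{k}}\geq c_{n}$; letting $\epsilon_{k}\to 0$ gives $\varphi^{n-2}(\sing_{E}v\cap B_{1/2})\geq c_{n}>0$, contradicting the baseline \eqref{n-2}. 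So the pieces you assembled (rescaling stays in $\fM_{\Lambda}$ with \eqref{bmo}, upper semicontinuity of $\sing$ into $\sing_{E}$ of the limit, the role of the constant in \eqref{sing}) are the right ones, but they should be fed into this one-shot compactness-and-density argument rather than into a dimension-reduction scheme whose key hypothesis fails here.
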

The strategy for proving Theorem \ref{n-2-e} is by contradiction: we first negate the statement to get a sequence of counterexamples. Then we rescale the sequence appropriately and use Theorem \ref{compactness} to pass to a limit map which violates Shi's theorem.

The assumption \eqref{bmo} is a rather strong one, and we don't know if it can be weakened in general. Nonetheless, in the case where $N$ is simply-connected, \eqref{bmo} can be removed thanks to a universal energy bound due to Hardt, Kinderlehrer and Lin.
\begin{thm}[\cite{hkl}]
\label{universal}
Assume $N$ is simply-connected. For each compact subset $K$ of $B$, there exists a constant $C = C(n, K, N, \Lambda)$ such that
\[
E^{0}(u, K) \leq C
\]
for any $u \in \fM_{\Lambda}$.
\end{thm}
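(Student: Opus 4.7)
The plan is to reduce Theorem \ref{universal} to a local universal bound on a single ball and then prove that bound by constructing, for each $u \in \fM_{\Lambda}$, an explicit competitor that exploits $\pi_{1}(N) = 0$. A standard chain-of-balls covering argument shows the theorem follows once we establish $E^{0}(u, B_{1/2}) \leq C(n, N, \Lambda)$ for all $u \in \fM_{\Lambda}$ defined on $B$.

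To prove this local bound, fix such a $u$ together with a functional $E \in \cF_{\Lambda}$ that $u$ minimizes. First I would apply Fubini on the annulus $B_{1} \setminus \overline{B_{1/2}}$ to select a radius $\rho \in (1/2, 1)$ at which the trace $\phi := u|_{\partial B_{\rho}}$ lies in $W^{1,2}(\partial B_{\rho}; N)$ with $\int_{\partial B_{\rho}} |du|^{2} \leq C\, E^{0}(u, B_{1})$. Next, the heart of the argument, I would construct a competitor $v \in W^{1,2}(B_{\rho}; N)$ with trace $\phi$ on $\partial B_{\rho}$ and controlled Dirichlet energy. The simple-connectedness of $N$ enters through an extension-by-triangulation procedure in the style of Luckhaus: approximate $\phi$ by a piecewise-constant map on a fine polyhedral decomposition of $\partial B_{\rho}$, interpolate geodesically across $1$-skeleta, cap off the loops appearing along $2$-cell boundaries using the null-homotopies guaranteed by $\pi_{1}(N) = 0$, and fill the remaining cells of $B_{\rho}$ by coning to a fixed base point in $N$. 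Given such a $v$, minimality \eqref{mindef} and ellipticity \eqref{ellipticity} combine to give
\[
E^{0}(u, B_{\rho}) \leq \Lambda\, E(u, B_{\rho}) \leq \Lambda\, E(v, B_{\rho}) \leq \Lambda^{2}\, E^{0}(v, B_{\rho}).
\]

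The main obstacle is upgrading this comparison into a truly uniform bound. The construction naturally produces an extension whose energy is linear in the boundary energy of $\phi$, so a single application only yields $E^{0}(u, B_{\rho}) \leq C(n, N, \Lambda)(1 + E^{0}(u, B_{1}))$, which is useless on its own. To close the argument one must iterate on a dyadic sequence of shrinking balls and absorb the boundary contribution via a self-improving Morrey-type estimate on $r \mapsto r^{2-n} E^{0}(u, B_{r})$, using the ellipticity to transfer bounds between $E$ and $E^{0}$ at each step. A secondary obstacle concerns the extension itself in dimension $n \geq 3$: although $\pi_{1}(N) = 0$ handles the capping of $1$-loops, the higher-dimensional cells of the polyhedral decomposition must still be filled in a way compatible with the Sobolev regularity of $\phi$, which is the delicate Hardt--Kinderlehrer--Lin step where the interplay between the cell size, the target geometry, and the Luckhaus approximation error has to be balanced carefully.
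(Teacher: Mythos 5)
First, a point of reference: the paper offers no proof of Theorem \ref{universal} --- it is quoted from \cite{hkl} and used as a black box --- so your proposal has to be measured against the argument in that reference rather than against anything in this paper.

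Your reduction to a single ball, the Fubini selection of a good radius, and the use of minimality together with \eqref{ellipticity} to pass between $E$ and $E^{0}$ are all fine, and the Luckhaus/skeleton-extension technology is indeed the right toolbox. The genuine gap is in how you propose to remove the dependence on $E^{0}(u,B_{1})$. As you yourself observe, an extension whose energy is \emph{linear} in the boundary energy only yields $E^{0}(u,B_{\rho})\leq C(1+E^{0}(u,B_{1}))$, and the fix you offer --- dyadic iteration plus a ``self-improving Morrey-type estimate'' --- cannot work: a linear comparison gives at best a differential inequality of the form $f(\rho)\leq C\rho f'(\rho)+C$ for $f(\rho)=E^{0}(u,B_{\rho})$, which integrates to $f(\rho)\leq C\rho^{1/C}f(1)+C$; each iteration step reintroduces the energy at the larger scale with a coefficient at least $1$, so the bound never becomes independent of $u$. (A Morrey-type decay estimate controls the behaviour as $\rho\to 0$ for a \emph{fixed} map; it cannot produce uniformity over $\fM_{\Lambda}$ at a fixed scale.) The whole content of the Hardt--Kinderlehrer--Lin lemma is that, when $\pi_{1}(N)=0$, the competitor can be built with energy \emph{sublinear} in the boundary energy --- schematically $E^{0}(v,B_{\rho})\leq C\bigl(1+\int_{\partial B_{\rho}}|\nabla_{T}u|^{2}\bigr)^{\beta}$ for some $\beta<1$ (for $N=S^{2}$ this is their folding/averaging construction with $\beta=1/2$; for general simply connected $N$ it comes from coupling the mesh of the polyhedral decomposition to the size of the boundary energy). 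Only then does minimality give $(f(\rho)-C)^{1/\beta}\leq Cf'(\rho)$, which integrates over $\rho\in(1/2,1)$ to a bound on $f(1/2)$ with no reference whatsoever to $f(1)$. In your sketch the cell size is never tied to the boundary energy, so the construction as described produces only the linear bound and universality is lost. (A smaller point: ``filling the remaining cells by coning to a fixed base point in $N$'' is not meaningful as stated, since coning in $\RR^{m}$ leaves $N$; one cones in the \emph{domain}, i.e.\ extends homogeneously, once the map has been defined on the cell boundaries, and higher-dimensional cells need no homotopy-theoretic input, only the fact that homogeneous extension preserves $W^{1,2}$.)
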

Using this result we obtain the following corollary of Theorem \ref{n-2-e}.
\begin{coro}
\label{simplyconn}
Suppose $N$ is simply-connected, then there exists $\ep = \ep(\Lambda, N, n)$ such that $\cH^{n-2-\ep}(\sing\ u) = 0 $ for each $u \in \fM_{\Lambda}$.
\end{coro}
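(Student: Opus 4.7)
The plan is to combine Theorem \ref{universal} (which supplies an a priori energy bound when $N$ is simply-connected) with Theorem \ref{n-2-e}. The missing ingredient is the scale-invariant hypothesis \eqref{bmo}, which I will supply by exploiting the invariance of $\fM_\Lambda$ under affine rescalings.

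\textbf{Step 1 (Scaling invariance).} If $u \in \fM_\Lambda$ minimizes $E \in \cF_\Lambda$ with metric $g$, and $B_{\rho}(x_0) \subset B$, then $\tilde u(y) := u(x_0 + \rho y)$ minimizes $\tilde E \in \cF_\Lambda$ whose metric $\tilde g(y) := g(x_0 + \rho y)$ satisfies \eqref{ellipticity} with the same constant. A change of variable gives
\[
E(v, B_{r}(x_0 + \rho y_0)) = \rho^{n-2}\, \tilde E(\tilde v, B_{r/\rho}(y_0))
\quad\text{and}\quad
E^{0}(\tilde v, B_{s}(y_0)) = \rho^{2-n}\, E^{0}(v, B_{\rho s}(x_0 + \rho y_0)),
\]
so both minimality and the scale-invariant Dirichlet energy transform as expected.

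\textbf{Step 2 (Uniform bound on scale-invariant energy).} For $u \in \fM_\Lambda$ and any $B_{2r}(x) \subset B$, let $w(z) := u(x + 2rz)$, which lies in $\fM_\Lambda$ on the unit ball by Step 1. Theorem \ref{universal} applied with the compact set $K = \overline{B_{1/2}}$ yields $E^{0}(w, B_{1/2}) \leq C(n, N, \Lambda)$, and the scaling identity from Step 1 gives
\[
r^{2-n}E^{0}(u, B_{r}(x)) \leq 2^{n-2}C =: E_{0}(n, N, \Lambda).
\]

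\textbf{Step 3 (Assembly).} Fix $u \in \fM_\Lambda$ and $x_0 \in B$, and set $\rho := (1 - |x_0|)/2$. Then $\tilde u(y) := u(x_0 + \rho y)$ lies in $\fM_\Lambda$ on $B$, and for any $x \in B_{1/2}$ and $r \in (0, 1/4)$ we have $B_{2r}(x) \subset B$, so Step 2 (applied to $\tilde u$) verifies \eqref{bmo} for $\tilde u$ with constant $E_0 = E_0(n, N, \Lambda)$. Theorem \ref{n-2-e} then produces $\epsilon = \epsilon(n, \Lambda, E_0) = \epsilon(n, N, \Lambda)$ with $\cH^{n-2-\epsilon}(\sing \tilde u \cap B_{1/2}) = 0$, and since $y \mapsto x_0 + \rho y$ is a similarity of ratio $\rho$, this transfers to $\cH^{n-2-\epsilon}(\sing u \cap B_{\rho/2}(x_0)) = 0$. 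Covering $B$ by countably many such balls $B_{\rho_k/2}(x_k)$ and using subadditivity finishes the proof. The only nontrivial point is Step 2, where the scale-invariance of $\fM_\Lambda$ is essential for turning the raw energy bound of Theorem \ref{universal} into the scale-invariant bound that feeds into Theorem \ref{n-2-e}.
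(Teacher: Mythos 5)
Your proof is correct and follows essentially the same route as the paper: rescale via $u_{x,r}(y)=u(x+2ry)$, note that $\fM_\Lambda$ is dilation-invariant, apply Theorem \ref{universal} to the rescaled map to verify \eqref{bmo} with a uniform $E_0=2^{n-2}C(n,N,\Lambda)$, and then invoke Theorem \ref{n-2-e}. Your Step 3 is in fact slightly more careful than the paper, which leaves implicit the covering argument needed to pass from $\cH^{n-2-\ep}(\sing u\cap B_{1/2})=0$ to the vanishing of $\cH^{n-2-\ep}(\sing u)$ on all of $B$.
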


This paper is organized as follows: In section 2 we introduce the concept of $\Gamma$-convergence and state a compactness result for energy functionals in $\cF_{\Lambda}$. In section 3 we utilize $\Gamma$-convergence to prove Theorem \ref{compactness}. In section 4 we prove Theorem \ref{n-2-e}. Finally, in section 5 we show how to derive Corollary \ref{simplyconn} from Theorem \ref{n-2-e} and Theorem \ref{universal}.


\section{$\Gamma$-convergence and compactness of $\cF_{\Lambda}$}
In this section we introduce the concept of $\Gamma$-convergence, which will be integral to the proof of Theorem \ref{compactness}. For a general introduction to $\Gamma$-convergence, see \cite{dalmaso}.

\begin{defi}
Let $F_{k}: L^{2}(B; \RR^{m}) \to [0, +\infty]$ be functionals on $L^{2}(B; \RR^{m})$. For $w \in L^{2}(B; \RR^{m})$, we define
\begin{equation}
\Gamma-\limsup\limits_{k \to \infty}F_{k}(w) = \inf\{\limsup\limits_{k \to \infty}F_{k}(w_{k}) | w_{k} \to w \text{ in }L^{2}(B; \RR^{m})\}
\end{equation}
\begin{equation}
\Gamma-\liminf\limits_{k \to \infty}F_{k}(w) = \inf\{\liminf\limits_{k \to \infty}F_{k}(w_{k}) | w_{k} \to w \text{ in }L^{2}\}
\end{equation}
Moreover, we say that $\{F_{k}\}$ $\Gamma$-converges to $F$, denoted $F = \Gamma-\lim\limits_{k \to \infty}F_{k}$, if
\[
F  = \Gamma-\limsup\limits_{k\to \infty} F_{k} = \Gamma-\liminf\limits_{k\to\infty}F_{k}.
\]
\end{defi}
Since $L^{2}(B; \RR^{m})$ is a metric space, we have the following characterization for the $\Gamma$-limit.
\begin{lemm}[\cite{dalmaso}, Proposition 8.1]
\label{Gammacrit}
For $w \in L^{2}(B; \RR^{m})$, $F(w) = \Gamma-\limsup_{k \to \infty}F_{k}(w) = \Gamma-\liminf_{k\to\infty}F_{k}(w)$ if and only if
\begin{enumerate}
\item ($\liminf$-inequality) For each sequence $(w_{k})$ converging to $w$ in $L^{2}(B; \RR^{m})$, we have
\begin{equation}
\label{liminfineq}
F(w) \leq \liminf\limits_{k \to \infty}F_{k}(w_{k}).
\end{equation}

\item($\limsup$-inequality) There exists a sequence $(w_{k})$ converging to $w$ in $L^{2}(B; \RR^{m})$ such that
\begin{equation}
\label{limsupineq}
\limsup\limits_{k\to\infty}F_{k}(w_{k}) \leq F(w).
\end{equation}
\end{enumerate}
A sequence verifying \eqref{limsupineq} is called a \textbf{recovery sequence}.
\end{lemm}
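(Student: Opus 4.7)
I plan to prove both directions separately, with the forward direction (from the $\Gamma$-limit hypothesis to the two inequalities) containing the only nontrivial step, a diagonalization that relies on metrizability of $L^{2}$.

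For the easier direction, assume conditions (1) and (2) hold. From (1), every sequence $w_{k} \to w$ in $L^{2}$ satisfies $F(w) \leq \liminf_{k} F_{k}(w_{k})$, so taking the infimum over all such sequences gives $F(w) \leq \Gamma-\liminf_{k} F_{k}(w)$. The recovery sequence in (2), combined with the trivial inequality $\liminf \leq \limsup$, yields $\Gamma-\liminf_{k} F_{k}(w) \leq F(w)$ and likewise $\Gamma-\limsup_{k} F_{k}(w) \leq F(w)$. Together with the obvious $\Gamma-\liminf \leq \Gamma-\limsup$, these bounds force all four quantities to coincide with $F(w)$.

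For the converse, assume $F(w) = \Gamma-\liminf_{k} F_{k}(w) = \Gamma-\limsup_{k} F_{k}(w)$. The $\liminf$-inequality is immediate from the definition: for any sequence $w_{k} \to w$ in $L^{2}$, $\liminf_{k} F_{k}(w_{k}) \geq \Gamma-\liminf_{k} F_{k}(w) = F(w)$. The $\limsup$-inequality requires exhibiting a sequence that realizes the infimum defining $\Gamma-\limsup$. By definition of infimum, for each $n \in \NN$ one can select a sequence $(w_{k}^{(n)})_{k}$ with $w_{k}^{(n)} \to w$ in $L^{2}$ as $k \to \infty$ and $\limsup_{k} F_{k}(w_{k}^{(n)}) \leq F(w) + 1/n$. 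Choose integers $K_{1} < K_{2} < \cdots$ such that $\|w_{k}^{(n)} - w\|_{L^{2}} < 1/n$ and $F_{k}(w_{k}^{(n)}) < F(w) + 2/n$ for all $k \geq K_{n}$, and set $w_{k} := w_{k}^{(n)}$ when $K_{n} \leq k < K_{n+1}$. Then $w_{k} \to w$ in $L^{2}$ and $\limsup_{k} F_{k}(w_{k}) \leq F(w)$, as desired.

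The heart of the argument is the diagonal construction in the last paragraph, whose validity hinges on the fact that $L^{2}$ is a metric space, so that convergence can be quantified by a single norm; without metrizability of the ambient space such a diagonal sequence need not exist and the characterization would need to be reformulated in terms of a local base of neighborhoods at $w$.
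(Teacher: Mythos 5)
Your proof is correct. The paper offers no proof of this lemma at all---it is quoted verbatim from \cite{dalmaso} (Proposition 8.1)---and your argument is the standard one: the easy direction follows by taking infima over admissible sequences, and the converse's recovery sequence is produced by exactly the diagonal construction you give, which, as you rightly emphasize, uses only the metrizability (first countability) of $L^{2}$. The one point you should add is the degenerate case $F(w)=+\infty$: there the bounds $F(w)+1/n$ and $F(w)+2/n$ are all $+\infty$, so the selection of $K_{n}$ as you phrase it is vacuous or breaks down, but the $\limsup$-inequality then holds trivially for the constant sequence $w_{k}=w$; with that case noted separately, the argument is complete.
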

The following compactness result is what makes $\Gamma$-convergence useful to us. 
\begin{prop}[\cite{dalmaso}, Theorem 22.2]
\label{flambdacompact}
Let $\{E_{k}\}$ be a sequence in $\cF_{\Lambda}$. Then, passing to a subsequence if necessary, there exists $E \in \cF_{\Lambda}$ such that
\[
E(\cdot, A) = \Gamma-\lim\limits_{k \to \infty}E_{k}(\cdot, A),\ \text{for all }A \in \cA.
\]
\end{prop}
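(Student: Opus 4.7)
The plan is to separate the proof into two stages: first produce a $\Gamma$-convergent subsequence by abstract compactness, then identify the limit as a member of $\cF_\Lambda$ via an integral representation argument.

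For the first stage, I would fix a countable base $\{A_j\}$ for the topology of $B$ (say, balls with rational centers and radii contained in $B$). For each fixed $j$, the functional $E_k(\cdot, A_j)$ lives on the separable metric space $L^2(B; \RR^m)$, so the general $\Gamma$-compactness theorem on separable metric spaces (Dal Maso, Theorem 8.5) produces a subsequence along which $E_k(\cdot, A_j)$ $\Gamma$-converges. A standard Cantor diagonal argument then yields one subsequence (still denoted $E_k$) such that the $\Gamma$-limit $F(\cdot, A_j)$ exists for every $j$. The delicate point is extending this to an arbitrary open $A \in \cA$. I would set $E(u, A) := \sup\{ F(u, A_j) : A_j \subset\subset A \}$ and verify, via the inner regularity produced by the ellipticity bound \eqref{ellipticity}, that this coincides with both the $\Gamma\text{-}\liminf$ and $\Gamma\text{-}\limsup$ of $E_k(\cdot, A)$. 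The De Giorgi–Letta criterion then upgrades $A \mapsto E(u, A)$ to (the restriction of) a Borel measure on $B$.

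For the second stage, I need to show $E$ has the specific form \eqref{flambda}. The $\limsup$-inequality applied to the constant recovery sequence $u_k \equiv u$ gives $E(u, A) \leq \Lambda\, E^0(u, A)$; in particular $E(u, A) < \infty$ whenever $u \in W^{1,2}$. Sequential $L^2$-lower semicontinuity of a $\Gamma$-limit is automatic. Combined with translation invariance in $u$ (inherited from each $E_k$, since adding a constant does not change the integrand), the Buttazzo–Dal Maso integral representation theorem gives
\[
E(u, A) = \int_A f(x, Du(x))\, dx
\]
for some Carath\'eodory integrand $f$. The lower bound in \eqref{ellipticity} together with quadratic scaling $E_k(tu, A) = t^2 E_k(u, A)$ (which survives $\Gamma$-convergence) forces $f(x, \cdot)$ to be a nonnegative quadratic form in $Du$ with measurable coefficients $a^{ij}(x)$ obeying $\Lambda^{-1}|\xi|^2 \leq a^{ij}(x) \xi_i \xi_j \leq \Lambda |\xi|^2$.

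Finally, I would recover the metric $g$ by a pointwise algebraic step: for a symmetric matrix $A(x) = (a^{ij}(x))$ with these bounds, solving $A = \sqrt{\det g^{-1}}\, g^{-1}$ pointwise produces a unique positive-definite $g(x)$ whose entries are measurable and bounded above and below, so $g$ is an $L^\infty$ Riemannian metric and the bounds \eqref{ellipticity} for $g$ follow from those for $A$. I expect the main obstacle to be the integral representation step: one must verify all the structural hypotheses (measure-like dependence on $A$, locality, lower semicontinuity, and growth) needed to apply Buttazzo–Dal Maso, and the locality/measure property in particular requires careful cut-and-paste constructions using the ellipticity bound to control the energy of the cutoffs. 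Once the quadratic form structure is established, the passage back to $\sqrt{g}\, g^{ij}$ is a purely algebraic reparameterization.
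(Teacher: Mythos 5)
The paper does not actually prove this proposition: it is quoted from Dal Maso's book (Theorem 22.2), and the only argument supplied is Remark \ref{scal}, which reduces the vector-valued case to the scalar one by writing $E(u,A)=\sum_{h}E_{\text{scal}}(u^{h},A)$. Your outline is essentially a reconstruction of the proof in the cited reference --- localization over a countable base plus a diagonal argument, the fundamental estimate giving inner regularity and the De Giorgi--Letta criterion, integral representation, and identification of the integrand --- and that skeleton, including your identification of the cut-and-paste step as the delicate point, is sound.

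Two steps as written would not go through, however. First, $2$-homogeneity of the $\Gamma$-limit (which does survive, by rescaling recovery sequences) together with convexity does not force $f(x,\cdot)$ to be a quadratic form: the square of any non-Euclidean norm is convex, even, $2$-homogeneous and satisfies two-sided quadratic bounds without being quadratic. What is actually needed is that the parallelogram identity passes to the $\Gamma$-limit of quadratic forms; this is a separate, nontrivial theorem (Dal Maso, Theorem 11.10) and must be invoked. Second, running the integral representation directly in the vector-valued setting yields a priori a quadratic integrand $\sum_{h,l}a^{ij}_{hl}(x)D_{j}u^{h}D_{i}u^{l}$ with cross-component coupling, which is not of the form \eqref{flambda}; you must either rule out the coupling or, as Remark \ref{scal} does, prove the scalar case and verify that the $\liminf$- and $\limsup$-inequalities for the sum follow componentwise (the former by superadditivity of $\liminf$, the latter by concatenating scalar recovery sequences). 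Finally, the ``purely algebraic'' recovery of $g$ from $A=\sqrt{\det g}\,g^{-1}$ works only for $n\geq 3$, where $\det g=(\det A)^{2/(n-2)}$; for $n=2$ the matrix $\sqrt{\det g}\,g^{-1}$ always has determinant one, so the inversion is not always possible --- a corner case the paper itself glosses over, but worth flagging.
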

\begin{rmk}
\label{scal}
In fact, \cite{dalmaso} considers only scalar-valued functions ($m=1$). Nonetheless, the vector-valued case ($m > 1$) follows quite easily. The key is that for each $E \in \cF_{\Lambda}$ and $u = (u^{1}, \dots, u^{m}) \in W^{1, 2}(B; \RR^{m})$, we can write
\[
E(u, A) = \sum\limits_{k=1}^{m}\int_{A}\sqrt{g}g^{ij}D_{j}u^{k}D_{i}u^{k} \equiv \sum\limits_{k=1}^{m}E_{\text{scal}}(u^{k}, A)
\]
and apply the case $m=1$ to $E_{\text{scal}}$.
\end{rmk}



\section{Compactness of $\fM_{\Lambda}$}
In this section we prove Theorem \ref{compactness}. Suppose $\{u_{k}\}$ is a sequence in $\fM_{\Lambda}$ with locally uniformly bounded energy, i.e.
\[
\sup\limits_{k}E^{0}(u_{k}, B_{r}(x))< +\infty, \text{ for each }B_{r}(x) \subset\subset B.
\] 
The first conclusion of the Theorem then follows by a standard diagonal argument, yielding a limit map $u \in W^{1, 2}(B; N)$. Now, for each $k$, suppose $E_{k} \in \cF_{\Lambda}$ is a functional minimized locally by $u_{k}$. By Proposition \ref{flambdacompact}, passing to a further subsequence is necessary, we may assume that there is $E \in \cF_{\Lambda}$ such that 
\begin{equation}
\label{Ekconvergence}
E(\cdot, A) = \Gamma-\lim\limits_{k \to \infty}E_{k}(\cdot, A),\text{ for each }A \in \cA.
\end{equation}
\begin{prop}
\label{umin}
$u$ minimizes $E$ locally. In particular, $u \in \fM_{\Lambda}.$
\end{prop}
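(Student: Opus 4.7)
\emph{Strategy.} To show $u$ locally minimizes $E$, fix $B_r(x) \subset\subset B$ and a competitor $v \in W^{1,2}(B_r(x); N)$ with $u - v \in W^{1,2}_{0}(B_r(x); \RR^{m})$; extending $v$ by $u$ outside $B_r(x)$ gives $v \in W^{1,2}(B; N)$ with $v = u$ on $B \setminus B_r(x)$. The aim is $E(u, B_r(x)) \leq E(v, B_r(x))$. The plan is to combine the $\Gamma$-$\liminf$ inequality applied to $u_k \to u$ with the local minimality $E_k(u_k, B_{r+\delta}(x)) \leq E_k(v_k, B_{r+\delta}(x))$ for a suitably constructed competitor $v_k$, then let $\delta \to 0$.

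\emph{Construction of $v_k$.} Fix $\delta > 0$. By Lemma \ref{Gammacrit} and \eqref{Ekconvergence} applied to the functional $E_k(\cdot, B_{r+\delta}(x))$, there exists $\tilde v_k \to v$ in $L^{2}(B; \RR^{m})$ with $\limsup_{k} E_k(\tilde v_k, B_{r+\delta}(x)) \leq E(v, B_{r+\delta}(x))$. A truncation-then-projection argument, using a Lipschitz extension $\Pi$ of the nearest-point projection of a tubular neighborhood of $N$ onto $N$, together with a.e.\ convergence $\tilde v_k \to v \in N$ along a subsequence and the fact that $|D\Pi| = 1$ on $N$, produces $\bar v_k : B \to N$ with $\bar v_k \to v$ in $L^{2}$ and the same $\limsup$ bound. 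Choose $\eta \in C^{\infty}_{c}(B_{r+\delta}(x))$ with $\eta \equiv 1$ on $B_r(x)$ and $|\nabla \eta| \leq C/\delta$, and on the annulus $A_\delta := B_{r+\delta}(x) \setminus \overline{B_r(x)}$ let $w_k := \eta \bar v_k + (1-\eta) u_k$. Since $v = u$ on $A_\delta$, we have $\|\bar v_k - u_k\|_{L^{2}(A_\delta)} \to 0$, and $w_k \to v$ lands in a tubular neighborhood of $N$ a.e.\ for $k$ large. Define $v_k := \bar v_k$ on $B_r(x)$, $v_k := \Pi(w_k)$ on $A_\delta$, and $v_k := u_k$ on $B \setminus B_{r+\delta}(x)$; then $v_k \in W^{1,2}(B; N)$ with $v_k - u_k \in W^{1,2}_{0}(B_{r+\delta}(x); \RR^{m})$. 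The product rule together with \eqref{ellipticity} gives the cutoff estimate
\[
E_k(v_k, A_\delta) \leq C\Lambda^{2}\Big(E^{0}(\bar v_k, A_\delta) + E^{0}(u_k, A_\delta) + \delta^{-2}\|\bar v_k - u_k\|_{L^{2}(A_\delta)}^{2}\Big).
\]

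\emph{Limit passage.} By local minimality of $u_k$ and the additivity of $E_k$ on disjoint sets,
\[
E_k(u_k, B_r(x)) \leq E_k(u_k, B_{r+\delta}(x)) \leq E_k(v_k, B_{r+\delta}(x)) = E_k(\bar v_k, B_r(x)) + E_k(v_k, A_\delta).
\]
Taking $\liminf_{k}$ on the left via the $\Gamma$-$\liminf$ inequality for $u_k \to u$, and $\limsup_{k}$ on the right using the first step's bound $\limsup_{k} E_k(\bar v_k, B_r(x)) \leq E(v, B_{r+\delta}(x))$ and the cutoff estimate (whose $\delta^{-2}$ term vanishes first as $k \to \infty$ for fixed $\delta$), we obtain
\[
E(u, B_r(x)) \leq E(v, B_{r+\delta}(x)) + C\Lambda^{2}\big(\limsup_{k} E^{0}(u_k, A_\delta) + \limsup_{k} E^{0}(\bar v_k, A_\delta)\big).
\]
After extracting a subsequence along which $|\nabla u_k|^{2}$ and $|\nabla \bar v_k|^{2}$ converge weakly-$*$ to finite Borel measures on a slightly larger ball, the error terms are bounded by the mass these measures assign to $\overline{A_\delta}$, which shrinks to $\partial B_r(x)$. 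Working with radii $r'$ at which all the relevant weak-limit measures charge no sphere (a co-countable condition) and using that $E(u, \cdot)$ and $E(v, \cdot)$ are absolutely continuous with respect to Lebesgue measure, we get $E(u, B_{r'}(x)) \leq E(v, B_{r'}(x))$ for such $r'$; the inequality for all $r$ follows by a routine approximation.

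\emph{Main obstacle.} The technical heart is the construction of $\bar v_k$—upgrading an unconstrained $L^{2}$-recovery sequence to one that takes values in $N$ without inflating the $\limsup$-energy. A naive composition with a global Lipschitz projection would cost a factor of $\mathrm{Lip}(\Pi)^{2}$; removing this factor requires combining a.e.\ convergence along a subsequence with the fact that $D\Pi$ restricted to $TN$ is the identity, which is the only step that goes beyond formal appeals to $\Gamma$-convergence.
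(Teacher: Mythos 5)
Your overall architecture matches the paper's: obtain a recovery sequence for the extended competitor, force it to take values in $N$, glue it to $u_k$ on a thin annulus to make it admissible, and then combine the minimality of $u_k$ with the $\Gamma$-$\liminf$ inequality, sending the annulus width to zero at the end. The handling of the annular error terms via weak-$*$ limits of the energy densities and radii not charged by the limit measures is a workable substitute for the paper's pigeonhole choice among $M$ sub-annuli. You also correctly identify that upgrading the recovery sequence to an $N$-valued one is delicate; the paper does this by a componentwise truncation that yields genuine $L^{\infty}$ convergence before projecting, and your sketch, if fleshed out along those lines, is fine.

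The genuine gap is in the gluing step on $A_{\delta}$. You set $w_k = \eta \bar v_k + (1-\eta) u_k$ and assert that $w_k$ ``lands in a tubular neighborhood of $N$ a.e.\ for $k$ large'' because $\|\bar v_k - u_k\|_{L^2(A_\delta)} \to 0$. This does not follow: $L^2$ (or even a.e.) convergence of $\bar v_k - u_k$ to $0$ does not prevent, for each fixed large $k$, a set of positive measure on which $|\bar v_k - u_k| > d$, and on that set the convex combination $w_k$ exits the tubular neighborhood $N_d$, so $\Pi(w_k)$ either is undefined or fails to take values in $N$. Then $v_k \notin W^{1,2}(B;N)$ and it is not a legitimate competitor in \eqref{mindef}. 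There is no cheap repair: you cannot truncate $u_k$ (it must agree with the minimizer near the outer boundary), and redefining $w_k$ on the bad set while staying in $W^{1,2}$ and matching both boundary traces is precisely the content of the Luckhaus lemma, which is what the paper invokes (Lemma \ref{Luckhaus}) to produce the transition map $s_k$ with energy bounded by $C\bigl(E^0(\tilde w_k) + E^0(u_k) + \epsilon^{-2}\rho^{-n}\|\tilde w_k - u_k\|_{L^2}^2\bigr)$ on the annulus. Your proof needs either this lemma or an equivalent interpolation device; as written, the naive cutoff-plus-projection fails exactly where the two $N$-valued maps being glued are only $L^2$-close.
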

\begin{proof}
It suffices to prove that for each $\theta \in (1/2, 1)$ and each $v \in W^{1, 2}(B_{\theta}; N)$ with $u - v \in W^{1, 2}_{0}(B_{\theta}; \RR^{m})$, we have 
\[
E(u, B_{\theta}) \leq E(v, B_{\theta}).
\]
Define $\tilde{v}$ by 
\[
\tilde{v} = 
\left\{
\begin{array}{cl}
v &, \text{ in }B_{\theta}\\
u &, \text{ in }B - B_{\theta}
\end{array}.
\right.
\]
Next we fix positive numbers $\delta$ and $\eta$, to be sent to zero later. By \eqref{Ekconvergence} and recalling Lemma \ref{Gammacrit}, there exists a sequence $\{v_{k}\}$ in $W^{1, 2}(B_{\theta(1 + \eta)}; \RR^{m})$ converging to $\tilde{v}$ strongly in $L^{2}(B_{\theta(1 + \eta)}; \RR^{m})$ such that
\begin{equation}
\label{vlimsup}
\limsup\limits_{k \to \infty}E_{k}(v_{k}, B_{\theta(1 + \eta)}) \leq E(\tilde{v}, B_{\theta(1 + \eta)}).
\end{equation}
Below we show that based on $\{v_{k}\}$ we can construct a new sequence $\{\tilde{v}_{k}\}$, still converging to $\tilde{v}$ strongly in $L^{2}(B_{\theta(1 + \eta)}; \RR^{m})$, such that \eqref{vlimsup} is preserved and that $\tilde{v}_{k}(x) \in N$ for a.e. $x$. This will be done in two steps.

\textbf{Step 1: Improve to $L^{\infty}$-convergence}

This construction is inspired by \cite{bd}.
Since $v_{k} \to \tilde{v}$ in measure, there exists a sequence $k_{p}$ going to infinity such that 
\begin{equation}
\label{measureineq}
\left| \left\{x \in B_{\theta(1 + \eta)} |\ |v^{h}_{k}(x) - \tilde{v}^{h}(x)| > \frac{1}{p}\right\} \right| < \frac{1}{p}, \text{ for all }k \geq k_{p}, h \in \{1, \cdots, m\}.
\end{equation}
Now we define a new sequence $\{w_{k} = (w^{1}_{k}, \dots, w^{m}_{k})\}$ as follows. Whenever $k$ satisfies $k_{p} \leq k < k_{p+1}$, we define
\begin{equation}
w^{h}_{k} = 
\left\{
\begin{array}{cl}
v^{h}_{k} &,\text{ if } |v^{h}_{k} - \tilde{v}^{h}| \leq \frac{1}{p}\\
\tilde{v}^{h} + \frac{1}{p} &, \text{ if } v^{h}_{k} > \tilde{v}^{h} + \frac{1}{p}\\
\tilde{v}^{h} - \frac{1}{p} &, \text{ if } v^{h}_{k} < \tilde{v}^{h} - \frac{1}{p}
\end{array}.
\right.
\end{equation}
Then $w_{k} \in W^{1, 2}(B_{\theta(1 + \eta)}; \RR^{m})$ and $\|w_{k}^{h} - \tilde{v}^{h}\|_{L^{\infty}} \leq \frac{1}{p}$ for $k_{p} \leq k < k_{p+1}$, so 
\begin{equation}
\label{linfconv}
\lim\limits_{k \to \infty} \|w_{k} - \tilde{v}\|_{L^{\infty}} = 0.
\end{equation}
Moreover, notice that by \eqref{measureineq} and the definition of $w_{k}$, we have, for $k_{p} \leq k < k_{p + 1}$
\begin{equation}
\label{vwmeas}
\left| \left\{ x\in B_{\theta(1 + \eta)}|\ w^{h}_{k}(x) \neq v^{h}_{k}(x)\right\} \right| \leq \frac{1}{p}.
\end{equation}
Moreover, recalling Remark \ref{scal} and again using the definition of $w_{k}$, we get
\begin{align*}
E_{k}(w_{k}, B_{\theta(1 + \eta}) &= \sum\limits_{h=1}^{m}\left( \int_{\{ w^{h}_{k} = v^{h}_{k} \}}\sqrt{g_{k}}g^{ij}_{k}D_{j}v^{h}_{k}D_{i}v^{h}_{k} + \int_{\{ w^{h}_{k} \neq v^{h}_{k} \}}\sqrt{g_{k}}g^{ij}_{k}D_{j}\tilde{v}^{h}D_{u}\tilde{v}^{h} \right)\\
&\leq E_{k}(v_{k}, B_{\theta(1 + \eta)}) + C(\Lambda)\sum\limits_{h=1}^{m}\int_{\{w_{k}^{h} \neq v_{k}^{h}\}}|d\tilde{v}^{h}|^{2}.
\end{align*}
Letting $k \to \infty$ and using \eqref{vwmeas}, \eqref{vlimsup}, we have
\begin{equation}
\label{wlimsup}
\limsup\limits_{k \to \infty}E_{k}(w_{k}, B_{\theta(1 + \eta)}) \leq E(\tilde{v}, B_{\theta(1 + \eta)}).
\end{equation}

\textbf{Step 2:  Projecting onto $N$}

Since $\tilde{v}(x) \in N$ a.e., by \eqref{linfconv}, we infer that 
\begin{equation}
\label{distconv}
d(w_{k}, N) \text{ converges to zero in }L^{\infty}(B_{\theta(1 + \eta)}). 
\end{equation}
Since $N$ is compact, we may assume that there exists $d > 0$
\[
N_{d} = \{x \in \RR^{m} |\ d(x, N) \leq d\}
\]
is strictly contained in a tubular neighborhood of $N$. Let $\pi$ denote the nearest-point projection onto $N$; then eventually $\tilde{w}_{k} = \pi\circ w_{k}$ is defined. By \eqref{wlimsup}, \eqref{distconv} and the smoothness of $\pi$ we infer that 
\[
\lim\limits_{k\to \infty}\|\tilde{w}_{k} - \tilde{v}\|_{L^{\infty}(B_{\theta(1 + \eta)}; \RR^{m})} = 0
\]
and that
\begin{equation}
\label{wtlimsup}
\limsup\limits_{k \to \infty}E_{k}(\tilde{w}_{k}, B_{\theta(1 + \eta)}) \leq E(\tilde{v}, B_{\theta(1 + \eta)}).
\end{equation}

To proceed, we need the following version of the Luckhaus lemma. A proof can be found in \cite{leon}
\begin{lemm}
\label{Luckhaus}
Let $N$ be a compact submanifold of $\RR^{m}$ with $N_{d}$ strictly contained in a tubular neighborhood of $N$. Let $L$ be a positive constant. Then there exists a constant $\delta(n, L, d)$ such that for all $\epsilon \in (0, \delta)$, if $u, v \in W^{1, 2}(B_{(1 + \epsilon)\rho}(y) - B_{\rho}(y); N)$ satisfy
\begin{equation*}
\rho^{2-n}\int_{B_{(1 + \epsilon)\rho}(y) - B_{\rho}(y)}|Du|^{2} + |Dv|^{2} \leq L,
\end{equation*} 
\begin{equation*}
\epsilon^{-2n}\rho^{-n}\int_{B_{(1 + \epsilon)\rho}(y) - B_{\rho}(y)}|u - v|^{2} \leq \delta^{2}.
\end{equation*}
Then there exists $w  \in W^{1, 2}(B_{(1 + \epsilon)\rho}(y) - B_{\rho}(y); N)$ such that $w= u$ near $\partial B_{\rho}(y)$, $w = v$ near $\partial B_{(1 + \epsilon)\rho}(y)$ and 
\begin{align*}
&\rho^{2-n}\int_{B_{(1 + \epsilon)\rho}(y) - B_{\rho}(y)}|Dw|^{2}\\ &\leq C\rho^{2-n}\int_{B_{(1 + \epsilon)\rho}(y) - B_{\rho}(y)}|Du|^{2} + |Dv|^{2} + C\epsilon^{-2}\rho^{-n}\int_{B_{(1 + \epsilon)\rho}(y) - B_{\rho}(y)}|u - v|^{2},
\end{align*}
where $C$ depends on $n, L$ and $\sup\limits_{p \in N_{d}} |(d\pi)_{p}|$.
\end{lemm}
To apply the lemma to our situation, we choose $M$ large enough so that
\begin{equation}
\label{unifbdd}
E^{0}(u_{k}, B_{\theta(1 + \eta)}) + E^{0}(\tilde{w}_{k}, B_{\theta(1 + \eta)}) < M\delta,\ \forall k.
\end{equation}
Consider the annuli
\[
A_{\eta, l} = B_{\theta(1 + l\frac{\eta}{M})} - B_{\theta(1 + (l-1)\frac{\eta}{M})},\ l = 1, 2, \dots, M.
\]
By \eqref{unifbdd}, there exists an $l$ such that
\begin{equation}
E^{0}(u_{k}, A_{\eta, l}) + E^{0}(\tilde{w}_{k}, A_{\eta, l})< \delta\text{ for infinitely many }k.
\end{equation}
Without loss of generality we assume that this is satisfied for all $k$.

Next let $\rho = \theta(1 + (l-1)\frac{\eta}{M})$ and $\epsilon = \frac{\eta}{2M}$.
Then we have 
\[
\theta(1 + (l-1)\frac{\eta}{M}) = \rho \leq (1 + \epsilon)\rho \leq \theta(1 + l\frac{\eta}{M}).
\]
Now since $\tilde{v} = u$ on $B - B_{\theta}$ and $\lim\limits_{k \to \infty} \|\tilde{v} - \tilde{w}_{k}\|_{L^{2}} = \lim\limits_{k \to \infty} \|v - u_{k}\|_{L^{2}} = 0$, we have
\begin{equation*}
\lim\limits_{k \to \infty}\int_{B_{(1 + \epsilon)\rho} - B_{\rho}}|\tilde{w}_{k} - u_{k}|^{2} = 0.
\end{equation*}
Moreover, it's clear that there is a constant $L$ such that
\[
\rho^{2-n}\int_{B_{(1 + \epsilon)\rho} - B_{\rho}}|D\tilde{w}_{k}|^{2} + |Du_{k}|^{2} \leq L \text{ for all }k,
\]
and as $k$ tends to infinity, eventually we have
\[
\epsilon^{-2n}\rho^{-n}\int_{B_{(1 + \epsilon)\rho} - B_{\rho}}|\tilde{w}_{k} - u_{k}|^{2} \leq \delta(n, L, d)^{2}.
\]
Thus we can apply Lemma \ref{Luckhaus} with $\tilde{w}_{k}, u_{k}$ in place of $u, v$, respectively, obtaining a sequence $\{s_{k}\}$ in $L^{2}(B_{(1 + \epsilon)\rho} - B_{\rho}; N)$ such that 
\begin{align*}
&\int_{B_{(1 + \epsilon)\rho} - B_{\rho}}|Ds_{k}|^{2}\\ &\leq C\int_{B_{(1 + \epsilon)\rho} - B_{\rho}}|D\tilde{w}_{k}|^{2} + |Du_{k}|^{2} + C\epsilon^{-2}\rho^{-2}\int_{B_{(1 + \epsilon)\rho}(y) - B_{\rho}(y)}|\tilde{w}_{k} - u_{k}|^{2}\\
&\leq C\delta + C\epsilon^{-2}\rho^{-2}o(1).
\end{align*}
Sending $k$ to $\infty$ in the above inequality, we get
\begin{equation}
\label{transitionbound}
\limsup\limits_{k \to \infty}E^{0}(s_{k}, B_{(1 + \epsilon)\rho} - B_{\rho}) \leq C\delta.
\end{equation}
Now we define 
\[
\tilde{v}_{k} = 
\left\{
\begin{array}{ll}
\tilde{w}_{k} & , \text{ on }B_{\rho}\\
s_{k} & ,\text{ on }B_{(1 +\epsilon)\rho} - B_{\rho}\\
u_{k} & ,\text{ on }B - B_{(1 + \epsilon)\rho}
\end{array}
\right.
.
\]
Since $\Gamma-\lim\limits_{k \to\infty}E_{k}(\cdot, A) = E(\cdot, A)$, for all open subset $A$ of $B$, by the $\liminf$-inequality \eqref{liminfineq}, we have
\begin{align*}
E(u, B_{\theta}) &\leq \liminf\limits_{k \to \infty} E_{k}(u_{k}, B_{\theta}) \leq \limsup\limits_{k \to \infty}E_{k}(u_{k}, B_{\theta})\\
& \leq \limsup\limits_{k \to \infty}E_{k}(u_{k}, B_{(1 + \epsilon)\rho})\\
& \leq \limsup\limits_{k \to \infty} E_{k}(\tilde{v}_{k}, B_{(1 + \epsilon)\rho})\ ( u_{k}\text{ is minimizing }) \\
& \leq \limsup\limits_{k \to \infty}\left( E_{k}(\tilde{w}_{k}, B_{\rho}) + CE^{0}(s_{k}, B_{(1 + \epsilon)\rho} - B_{\rho})\right)\\
&  \leq \limsup\limits_{k \to \infty} E_{k}(\tilde{w}_{k}, B_{\theta ( 1 + \eta)}) + C\delta\ (\text{ by }\eqref{transitionbound})\\
& \leq E(\tilde{v}, B_{\theta(1 + \eta)}) + C\delta\ (\text{ by }\eqref{wlimsup}).\\
\end{align*}
Since $\delta, \eta > 0$ is arbitrary,  we have 
\[
E (u, B_{\theta}) \leq E (\tilde{v}, B_{\theta}) =  E(v, B_{\theta}).
\]
This completes the proof of Proposition \ref{umin}.
\end{proof}

Next we prove the second conclusion of Theorem \ref{compactness}. For each $B_{\theta}(x)\subset\subset B$, we take the comparison map $v \in W^{1, 2}(B_{\theta}(x); N)$ in the previous proposition to be just $u$ itself restricted to $B_{\theta}(x)$. Then $\tilde{v}$ would just be $u$. Following the arguments of Proposition \ref{umin}, we have
\begin{align*}
E(u, B_{\theta}(x)) &\leq \liminf\limits_{k\to \infty}E_{k}(u_{k}, B_{\theta}(x))\\
&\leq \limsup\limits_{k \to\infty}E_{k}(u_{k}, B_{\theta}(x)) \leq E(u, B_{\theta(1 + \eta)}(x)) + C\delta,\ 
\end{align*}
for all $\delta, \eta >0$. From this the second conclusion of Theorem \ref{compactness} follows easily and we've completed the proof of the Theorem \ref{compactness}.


\section{Improved upper bound for the singular set dimension}
In this section we prove Theorem \ref{n-2-e}. As mentioned in the introduction, the proof is by contradiction. Therefore we fix $\Lambda$ and $E_{0}$ and suppose that there exists a sequence of maps $\{u_{k}\}$ in $\fM_{\Lambda}$ and a sequence of positive numbers $\epsilon_{k}$ converging to zero, such that each $u_{k}$ satisfies \eqref{bmo} and
\[
\cH^{n-2-\epsilon_{k}}(\sing\ u_{k}\cap B_{1/2}) > 0.
\]
Next, following \cite{su} we define, for any subset $A$ of $B$, 
\begin{equation}
\label{hinf}
\varphi^{s}(A) = \inf \{ \sum\limits_{i}r_{i}^{s}|\ A \subset \cup_{i}B_{r_{i}}(x_{i}) \}.
\end{equation}
Recall that (\cite{federer}, 2.10.2)
\begin{equation}
\label{hinfh}
\varphi^{s}(A) = 0 \text{ if and only if }\cH^{s}(A) = 0,
\end{equation}
and that (\cite{federer}, 2.10.19)
\[
\limsup\limits_{r \to 0}\frac{\varphi^{s}(A \cap B_{r}(x))}{r^{s}}\geq 2^{-s},\text{ for a.e.}\ x\in A.
\]
Hence for each $k$ we can choose $x_{k} \in \sing\ u_{k} \cap B_{1/2}$ and $r_{k} \in (0, 1/4)$ so that 
\begin{equation}
\label{density}
\frac{\varphi^{n-2-\epsilon_{k}}(\sing\ u_{k} \cap B_{r_{k}/2}(x_{k}))}{r_{k}^{n-2-\epsilon_{k}}}\geq c_{n}
\end{equation}
for some constant $c_{n}$ depending only on $n$.
Now define a sequence of rescaled maps by letting
\[
v_{k}(y) = u_{k}(x_{k} + r_{k}y),\ y \in B.
\]
Then \eqref{density} implies
\begin{equation}
\label{resdensity}
\varphi^{n-2-\ep_{k}}(\sing\ v_{k} \cap B_{1/2}) \geq c_{n}.
\end{equation}
Since each $u_{k}$ is in $\fM_{\Lambda}$, it is not hard to see that the sequence of rescaled maps $\{v_{k}\}$ is also in $\fM_{\Lambda}$. Moreover, by the bound \eqref{bmo}, we have
\[
\sup\limits_{k}E^{0}(v_{k}, B) = \sup\limits_{k}r_{k}^{2-n}E^{0}(u_{k}, B_{r_{k}}(x_{k})) \leq E_{0}.
\]Thus by Theorem \ref{compactness}, there exists $v \in \fM_{\Lambda}$ such that 
\begin{enumerate}
\item $v_{k} \to v$ stronlgly in $L^{2}(B_{\theta}(x); \RR^{m})$ for each $B_{\theta}(x) \subset\subset B$.
\item Suppose $v_{k}$ and $v$ minimize $E_{k}$ and $E$, respectively. Then for each $B_{\theta(x)} \subset\subset B$, \eqref{Econv} holds.
\end{enumerate}
Now for each covering $\{B_{r_{i}}(x_{i})\}$ of $\sing_{E}\ v \cap B_{1/2}$ by balls, let $K = B_{1/2} - \cup_{i}B_{r_{i}}(x_{i})$. Let $2d = \dist(K, \sing\ v \cap B_{1/2}) > 0$. Then by the definition of $\sing_{E}\ v$ (see \eqref{sing}), we can choose a finite covering $\{B_{s_{i}/2}(y_{i})\}_{i=1}^{Q}$ of $K$, with $y_{i} \in K$ and $s_{i} \leq d$ such that for each $i$,
\begin{equation}
\label{quantsingineq}
s_{i}^{2-n}E^{0}(v, B_{s_{i}}(y_{i})) \leq \frac{1}{2}(1 + \Lambda)^{-2}\ep.
\end{equation}
Now by \eqref{Econv}, there exists $k_{0}$ such that for all $k\geq k_{0}$ and $1 \leq i \leq Q$, 
\begin{equation}
\label{energyapproximation}
E_{k}(v_{k}, B_{s_{i}}(y_{i})) \leq E(v, B_{s_{i}}(y_{i})) + (2\Lambda)^{-1}s_{i}^{n-2}\ep.
\end{equation}
Hence for $k \geq k_{0}$ and $1 \leq i \leq Q$, 
\begin{align*}
E^{0}(v_{k}, B_{s_{i}}(y_{i})) &\leq \Lambda E_{k}(v_{k}, B_{s_{i}}(y_{i})) \text{ (by \eqref{ellipticity})}\\
& \leq \Lambda(E(v, B_{s_{i}}(y_{i})) + (2\Lambda)^{-1} s_{i}^{n-2}\ep) \text{ (by \eqref{energyapproximation})}\\
& < \Lambda( \Lambda E^{0}(v, B_{s_{i}}(y_{i})) + (2\Lambda)^{-1}s_{i}^{n-2}\ep) \text{ (again by \eqref{ellipticity})}\\
& \leq \Lambda ((2\Lambda)^{-1}\ep + (2\Lambda)^{-1}\ep)s_{i}^{n-2} \text{ (by \eqref{quantsingineq})}\\
&= s_{i}^{n-2}\ep.
\end{align*}
So by Theorem \ref{shi}, $B_{s_{i}/2}(y_{i}) \cap \sing\ v_{k} = \emptyset$. Hence for $k \geq k_{0}$, 
\[
\sing\ v_{k} \subset \cup_{i}B_{r_{i}}(x_{i}).
\]
Thus
\[
\sum\limits_{i}r_{i}^{n-2-\ep_{k}} \geq \varphi^{n-2-\ep_{k}}(\sing\ v_{k} \cap B_{1/2}) \geq c_{n},\ \text{for all }k\geq k_{0}.
\]
Letting $k$ tend to infinity, we get
\[
\sum\limits_{i}r_{i}^{n-2} \geq c_{n}.
\]
Since $\{B_{r_{i}}(x_{i})\}$ is an arbitrary covering of $\sing\ v \cap B_{1/2}$ by balls, we conclude from \eqref{hinf} that
\[
\varphi^{n-2}(\sing_{E}\ v\cap B_{1/2}) \geq c_{n} > 0.
\]
Hence by \eqref{hinfh}, this implies 
\[
\cH^{n-2}(\sing_{E}\ v\cap B_{1/2}) > 0,
\]
which is clearly in contradiction with \eqref{n-2}, and the proof of Theorem \ref{n-2-e} is complete.

\section{The case when $N$ is simply-connected}
In this section we assume in addition that $N$ is simply-connected and prove Corollary \ref{simplyconn}. By Theorem \ref{n-2-e} this reduces to verifying condition \eqref{bmo}. Given $u \in \fM_{\Lambda}$, let $E$ be a functional in $\cF_{\Lambda}$ of which $u$ is a local minimizer and suppose $E$ is given by \eqref{flambda} with some Riemannian metric $g$ of class $L^{\infty}$.

For each $B_{r}(x) \subset\subset B$ with $x \in B_{1/2}$ and $r \in (0, 1/4)$, we define 
\begin{equation}
u_{x, r}(y) = u(x + 2ry),\ y \in B.
\end{equation}
We also define $E_{x, r}: L^{2}(B; \RR^{m}) \times \cA \to [0, +\infty]$ by
\begin{equation}
E_{x ,r}(v, A) = 
\left\{
\begin{array}{cl}
\int_{A}\sqrt{g}(x + 2ry)g^{ij}(x + 2ry)D_{j}v(y) \cdot D_{i}v(y)dy & ,\ v|_{A} \in W^{1, 2}(A; \RR^{m})\\
+\infty &,\ \text{otherwise}
\end{array}
\right.
.
\end{equation}
Then it's not hard to see that $E_{x, r} \in \cF_{\Lambda}$ and $u_{x, r}$ is a local minimizer for $E_{x, r}$. Thus $u_{x, r} \in \fM_{\Lambda}$. Moreover, a straightforward computation shows that 
\begin{equation}
\label{energyrelation}
E^{0}(u, B_{r}(x)) = (2r)^{n-2}E^{0}(u_{x, r}, B_{1/2}).
\end{equation}
Now since $u_{x, r} \in \fM_{\Lambda}$, by Theorem \ref{universal} there is a constant $C = C(n, \Lambda, N)$ such that
\begin{equation}
\label{universalbound}
E^{0}(u_{x, r}, B_{1/2}) \leq C.
\end{equation}
Combining \eqref{energyrelation} and \eqref{universalbound}, we get
\[
r^{2-n}E^{0}(u, B_{r}(x)) \leq 2^{n-2}C.
\]
Hence condition \eqref{bmo} is verified with $E_{0} = 2^{n-2}C$ and Corollary \ref{simplyconn} follows immediately.

\bibliographystyle{amsalpha}
\bibliography{compactness}

\end{document}